\newtheorem{thm}{Theorem}[section] 
\newtheorem{cor}[thm]{Corollary}
\newtheorem{exa}[thm]{Example}
\newtheorem{prop}[thm]{Proposition}
\newtheorem{rem}[thm]{Remark}
\newtheorem{algo}[thm]{Algorithm}
\def\R{{\mathbb R}}
\def\P{{\mathbb P}}
\def\({\left(}
\def\){\right)}
\long\def\forget#1\forgotten{}
\newif \iffurther 
\newif \iffurther 
\newif\ifXY 
\begin{document}

\title{Eulerian partitions for configurations of skew lines}

\author[Roland Bacher and David Garber]{Roland
  Bacher$^{1}$ and David Garber$^2$}

\stepcounter{footnote}
\footnotetext{Corresponding author.}
\stepcounter{footnote}
\footnotetext{Partially supported by the Chateaubriand postdoctoral
  fellowship funded by the French government.}

\address{Roland Bacher, Institut Fourier, BP 74, 38402 Saint-Martin D'Heres CEDEX,
  France.}\email{roland.bacher@ujf-grenoble.fr}
\address{David Garber, Departement of Applied Mathematics, Faculty of
  Sciences, Holon Institute of Technology, 52 Golomb st., PO
  Box 305, 58102 Holon, Israel.}
\email{garber@hit.ac.il}

\keywords{Configurations of skew lines, linking matrix,
Switching graph, Eulerian graph,
Spindles, permutation}

\begin{abstract}
In this paper, which is a complement of \cite{BG}, we study a few
elementary invariants for configurations of skew lines, as 
introduced and analyzed first by Viro and
his collaborators. We slightly simplify the exposition of some 
known invariants and use them to define a natural
partition of the lines in a skew configuration.

We also describe an algorithm which constructs a spindle-permutation for
a given switching class, or proves non-existence of such a spindle-permutation.
\end{abstract}

\maketitle





\section{Introduction}

{\it A configuration of $n$ skew lines in $\R^3$} or a {\it skew
configuration} or an {\it interlacing of skew lines} is a set
of $n$ non-intersecting lines in $\R^3$
containing no pair of parallel lines.

Skew configurations are only considered up to
{\it rigid isotopies}
(continuous deformations of
skew configurations or, equivalently, isotopies of the ambient space
under which lines remain pairwise skew lines).

The study and classification of configurations of skew lines
up to isotopy was initiated by Viro \cite{V} and pursued
by Viro, Mazurovski{\u\i},
Borobia-Mazurovski{\u\i}, Drobotukhina and Khashin, see
for example \cite{BM1,BM2,D,K,MaR,Ma,VD}.
The survey paper \cite{VD} (and its
updates available on the authors web-sites and on the
arXiv) contains historical information and is
a good introduction into the subject and its higher-dimensional
generalizations.
Most of these results are also exposed in the survey paper
\cite{CP} from which we borrow some terminology not used in 
the original work of Viro's school.

\medskip
A {\it spindle} (or {\it isotopy join} or {\it horizontal configuration})
is a particularly nice configuration of skew lines
in which all lines intersect an oriented additional
line $A$, called the {\it axis} of the spindle.
Its isotopy class is completely described by a
{\it spindle-permutation} $\sigma:\{1,\dots,n\}\longrightarrow \{1,\dots,n\}$
encoding the order in which an open half-plane
revolving around its boundary $A$ intersects the lines
during a half-turn (see Section
\ref{spindles} for the precise definition).
A {\it spindle-configuration} is a skew configuration isotopic to a spindle.

Three types of combinatorial moves (described in Section
\ref{spindles}) of a spindle permutation yield
isotopic spindles-configurations and generate an 
equivalence relation, called the 
{\it spindle-equivalence} relation, on permutations
of $\{1,\dots,n\}$.

Isotopy classes of spindle-configurations are well understood 
by the following combinatorial description, given in \cite{BG}:

\begin{thm} \label{main}
Two spindle-permutations $\sigma,\sigma'$
give rise to isotopic spindle-configurations
if and only if $\sigma$ and $\sigma'$ are spindle-equivalent.
\end{thm}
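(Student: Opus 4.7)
The plan is to treat the two directions separately. For the forward (``if'') direction, I would verify that each of the three generating spindle-moves can be realised by an explicit ambient isotopy of $\R^3$ preserving the skew property throughout. Geometrically one expects a rotation around the axis $A$ to implement a cyclic conjugation of the permutation, a reversal of the orientation of $A$ (flipping the spindle) to implement an orientation-reversing move, and a small local ambient deformation to implement the commutation-type move at the boundary of two neighbouring layers of the spindle. Composing such elementary isotopies along any chain of generators then produces an isotopy between the two spindle-configurations.

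The converse (``only if'') direction is the substantive part, and the plan is to use an isotopy invariant of general skew configurations that is combinatorially readable from a spindle-permutation. The natural candidate is the linking matrix with $\pm 1$ entries, considered modulo simultaneous sign-change of the $i$-th row and column (the switching-class equivalence). For a spindle with permutation $\s$, each entry can be read off by comparing the order of lines $i,j$ along the axis $A$ with their order in $\s$; hence the switching class of the linking matrix is an explicit function of $\s$. Invariance of this switching class under skew isotopy (established by Viro's school, and also used as a central tool in \cite{BG}) then implies that isotopic spindle-configurations produce spindle-permutations with identical switching classes. The theorem thus reduces to the purely combinatorial assertion that if two spindle-permutations give rise to the same switching class, then they are related by a sequence of the three spindle-moves.

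This combinatorial step is where the main work lies. The plan is to proceed by induction on $n$, combined with a normal-form argument: first exhibit a canonical representative within each spindle-equivalence class, then verify that this representative is determined by the switching class. Each of the three moves manifestly preserves the switching class, so the non-trivial direction is the converse. The principal obstacle I anticipate is controlling the interplay between the global moves (rotation and reversal, which reorganise the permutation as a whole) and the local commutation move (which acts on a small window), so as to bring any two permutations sharing a switching class into a common normal form. Once this combinatorial matching is established, the theorem follows by combining it with the invariance of the switching class under skew isotopy.
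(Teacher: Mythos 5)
A preliminary point of reference: this paper does not itself prove Theorem \ref{main}; it imports the statement from \cite{BG}, where it is the main result. So the comparison below is against the argument of \cite{BG}. Your overall architecture reproduces that argument correctly: realize the three moves by ambient isotopies, note that the switching class of the linking matrix (computed from $\sigma$ via $x_{i,j}=\mathrm{sign}((i-j)(\sigma(i)-\sigma(j)))$) is an isotopy invariant, and thereby reduce the theorem to the purely combinatorial assertion that two spindle-permutations whose linking matrices are switching-equivalent must be related by the three moves.

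The genuine gap is that this combinatorial assertion --- which carries essentially all of the content of the theorem and occupies the bulk of \cite{BG} --- is left unproved in your proposal. You announce an induction on $n$ combined with a normal-form argument, but no normal form is exhibited, no inductive step is set up, and the obstacle you name (the interplay between the global circular move and the two local block reflections) is precisely the difficulty that has to be overcome; identifying it is not the same as resolving it. Be aware also of a circularity trap in this paper's logical layout: the statement ``same switching class implies spindle-equivalent'' is exactly the nontrivial half of Corollary \ref{maincor}, which the paper derives \emph{from} Theorem \ref{main} together with the Khashin--Mazurovski{\u\i} theorem that homologically equivalent spindles are isotopic. You therefore cannot invoke Corollary \ref{maincor} at this point, and Khashin--Mazurovski{\u\i} does not substitute for the missing step either (it converts equality of switching classes into an isotopy, not into a chain of moves). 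As written, the proposal is a correct plan whose central lemma is absent.
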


Orienting and labeling all lines of a skew configuration, one gets a {\it
linking matrix} encoding isotopy classes for pairs of oriented skew lines.
The associated {\it switching class} or {\it homology equivalence class}
is independent of labels and orientations.
A result of Khashin and Mazurovski{\u\i} \cite[Theorem 3.2]{KM}
states that homology-equivalent spindles (spindles defining
the same switching class) are isotopic. Thus we have:

\begin{cor} \label{maincor}
Two spindle-permutations define the same switching class if
and only if they are spindle-equivalent.
\end{cor}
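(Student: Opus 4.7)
The plan is to deduce the corollary as a direct concatenation of Theorem \ref{main} with the Khashin--Mazurovski{\u\i} theorem cited just above it. Both implications are essentially one-line arguments once the right invariance properties are recorded, so the work is mostly in noting that the two notions of equivalence line up correctly with the two black-box results.

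For the ``only if'' direction, I would assume $\sigma$ and $\sigma'$ are spindle-equivalent and invoke Theorem \ref{main} to produce a rigid isotopy between the corresponding spindle-configurations. The linking matrix is a rigid-isotopy invariant, and the switching class is by definition the orbit of the linking matrix under the natural relabeling/reorientation action; hence $\sigma$ and $\sigma'$ determine the same switching class.

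For the ``if'' direction, I would start from the hypothesis that $\sigma$ and $\sigma'$ determine the same switching class. Then the two spindle-configurations are homology-equivalent, and the result \cite[Theorem 3.2]{KM} of Khashin and Mazurovski{\u\i} asserts that homology-equivalent spindles are rigidly isotopic. Applying Theorem \ref{main} in the opposite direction then yields spindle-equivalence of $\sigma$ and $\sigma'$.

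There is no serious obstacle: the corollary is a formal combination of two substantial theorems. The only point worth verifying, and which I would state explicitly in the write-up, is that ``homology equivalence'' in the sense of \cite{KM} coincides with equality of switching classes in the sense used here; this is recorded in the sentence immediately preceding the corollary and requires no further argument.
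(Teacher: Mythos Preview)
Your proposal is correct and matches the paper's own derivation: the corollary is stated immediately after Theorem~\ref{main} and the Khashin--Mazurovski{\u\i} result with only the words ``Thus we have'', and your two paragraphs simply spell out that concatenation. One small remark: for the ``only if'' direction you do not even need the hard part of Theorem~\ref{main}, since the paper records just before the theorem that the three combinatorial moves yield isotopic spindle-configurations; but invoking the full if-and-only-if statement is of course also fine.
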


In this paper we define the Euler partition for a 
switching class.
The definition depends on the parity of the order and can be 
refined to an Euler tree for switching classes of even order.

The last part of the paper describes an algorithm for computing a
spindle-permutation (or proving its non-existence) for a given 
switching class.

\medskip
The sequel of this paper is organized as follows:
Sections \ref{mat_invar} and \ref{linking} are devoted to (various aspects of)
switching classes.
Section \ref{euler} deals with the Eulerian partition induced by the switching classes. Section \ref{spindles} introduces spindle-structures.
In Section \ref{algo} we describe an algorithm which computes (or proves
non-existence of) a spindle-permutation (which is unique up to
spindle-equi\-valence
by Corollary \ref{maincor}) having a linking matrix of
given switching class.


\section{Linking matrices and switching classes}\label{mat_invar}

Pairs of oriented under- or
over-crossing curves (as arising for instance from oriented
knots and links) can be encoded by signs $\pm 1$ as shown in 
Figure \ref{fig1}.

\begin{figure}[h]
\epsfysize=3cm
\centerline{\epsfbox{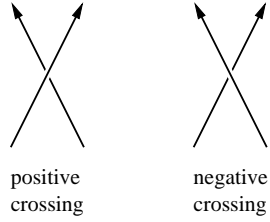}}
\caption{Positive and negative crossings}
\label{fig1}
\end{figure}

The {\it sign} or {\it linking number}
$\hbox{lk}(L_A,L_B)\in\{\pm 1\}$ between two oriented skew lines
$L_A,L_B\subset \R^3$ was introduced by Viro \cite{V}.
The {\it linking matrix} of a configuration involving $n$
oriented and labeled skew lines
$L_1,\dots,L_n$ is the symmetric $n\times n$
matrix $X$ with diagonal coefficients $x_{i,i}=0$ and
$x_{i,j}=\hbox{lk}(L_i,L_j)\in\{\pm 1\}$ for $i\not= j$.

\begin{figure}[h]
\epsfysize=5cm
\centerline{\epsfbox{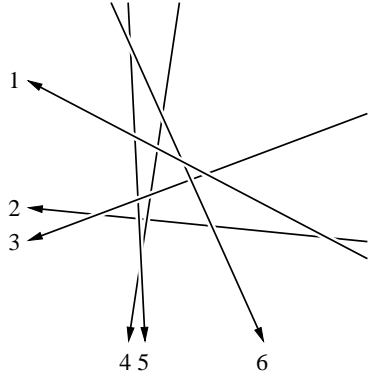}}
\caption{A configuration of $6$ labeled and oriented skew lines}
\label{fig2}\end{figure}

Figure \ref{fig2} shows a labeled and oriented configuration of six skew
lines with linking matrix
$$X=\left(
\begin{array}{rrrrrr}
0 &  1 &  1 &  1 &  1 &  1 \\
1 &  0 & -1 & -1 & -1 & -1 \\
1 & -1 &  0 &  1 &  1 & -1 \\
1 & -1 &  1 &  0 & -1 & -1 \\
1 & -1 &  1 & -1 &  0 & -1 \\
1 & -1 & -1 & -1 & -1 &  0
\end{array}
\right)\ .
$$

Two symmetric matrices $X$ and $Y$ are {\it switching-equivalent} if
$$Y=D\ P^t\ X\ P\ D$$
where $P$ is a permutation matrix and $D$ is a diagonal matrix with
$d_{i,i}\in\{\pm 1\}$. Since $P$ and $D$
are orthogonal, we have
$(PD)^{-1}=D^tP^t=DP^t$. Switching-equivalent matrices are thus conjugate and
have the same characteristic polynomial.

\begin{prop} All linking matrices of a fixed configuration of skew lines
are switching-equivalent.
\end{prop}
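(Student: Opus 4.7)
The plan is to decompose the passage between two linking matrices of the same configuration into the two elementary choices that enter the definition of a linking matrix: the labeling of the lines and the choice of an orientation for each line. Since any two such data differ by a permutation of the labels together with a subset of orientation reversals, it suffices to show that each of these elementary changes induces a switching-equivalence of matrices; their composition then yields precisely the form $Y = D\,P^t X P\,D$.

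First, I would fix one configuration $L_1,\dots,L_n$ with chosen orientations and consider what happens when we relabel via a permutation $\pi$. If $P$ denotes the permutation matrix with $P_{i,j} = 1$ iff $j = \pi(i)$, then the new $(i,j)$-entry is $\mathrm{lk}(L_{\pi(i)}, L_{\pi(j)}) = x_{\pi(i),\pi(j)}$, and a direct check shows that this is exactly the $(i,j)$-entry of $P^t X P$. This deals with the permutation factor.

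Next I would address orientations. The key point, which follows directly from the sign conventions in Figure \ref{fig1}, is that the linking number $\mathrm{lk}(L_A,L_B)$ is invariant under simultaneously reversing both orientations but flips sign if exactly one of the two orientations is reversed. Consequently, reversing the orientation of a single line $L_i$ negates the entire $i$-th row and the entire $i$-th column of $X$ (the diagonal entry $x_{i,i}=0$ is unaffected), which is exactly the effect of conjugating $X$ by the diagonal matrix $D^{(i)}$ having $-1$ in position $(i,i)$ and $+1$ elsewhere. Reversing the orientations of an arbitrary subset $S$ of lines is then realized by the product $D = \prod_{i \in S} D^{(i)}$, so $X \mapsto D X D$.

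Finally, I would combine the two steps: any other linking matrix $Y$ of the same configuration is obtained from $X$ by a relabeling followed by a choice of orientation reversals (or vice versa), giving $Y = D\,P^t X P\,D$ for some permutation matrix $P$ and some $\pm 1$ diagonal matrix $D$. Since $D$ and $P^t X P$ are symmetric and the operations preserve symmetry, $Y$ is still symmetric, as required. The only delicate point, and the one place where geometric input is needed, is the sign behavior of $\mathrm{lk}$ under single-line orientation reversal; everything else is a bookkeeping exercise on matrix indices.
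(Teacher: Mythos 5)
Your proposal is correct and follows exactly the paper's argument: relabeling corresponds to conjugation by a permutation matrix and orientation reversal to conjugation by a diagonal $\pm 1$ matrix, with their composition giving $Y = D\,P^t X P\,D$. The paper states this in two sentences; you have simply spelled out the same two-step decomposition in full detail.
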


\begin{proof}
Relabeling the lines conjugates a linking matrix $X$
by a permutation matrix. Inverting the orientation
of some lines amounts to conjugation by a diagonal $\pm 1$ matrix.
\end{proof}

\begin{rem}
The terminology \lq\lq
switching classes'' (many authors speak of \lq\lq two-graphs''
which is the standard terminology for the underlying combinatorial object)
is motivated by the following combinatorial interpretation and
definition of switching classes.

Two finite simple (loopless and no multiple edges)
graphs $\Gamma_1=(V,E_1)$
and $\Gamma_2=(V,E_2)$ 
are {\bf switching-related} with respect to a subset of
vertices $V_-\subset V$ if their edge-sets
$E_1,E_2$ coincide on $(V_-\times V_-)\cup \big((V\setminus V_-)\times
(V\setminus V_-)\big)$ and are complementary on $\big(V_-\times (V\setminus V_-)\big)\cup \big((V\setminus V_-)
\times V_-\big)$. A {\it switching class} of graphs
is an equivalence class of switching-related graphs, see Figure \ref{switching}
for two graphs in a common switching class.

\begin{figure}[h]
\epsfysize=2.5cm
\epsfbox{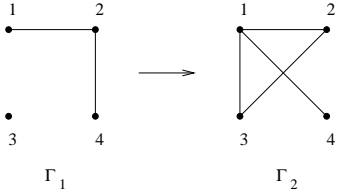}
\caption{$\Gamma_1$ and $\Gamma_2$ are switching-related with respect to
$\{1,2\}\subset \{1,2,3,4\}$}
\label{switching}
\end{figure}

Encoding adjacency, respectively non-adjacency, of distinct vertices
by $1$, respectively $-1$, yields a bijection between 
switching classes of graphs
and switching classes of matrices.
Conjugation by permutation matrices corresponds to relabeling the vertices of
a graph $\Gamma$. Conjugation by a diagonal $\pm 1-$matrix
corresponds
to the substitution of $\Gamma$ by a switching-related graph.
\end{rem}

\section{Switching classes and vorticity}\label{linking}

The {\it set of vorticities} (also called {\it homological equivalence
class or chiral signature}), introduced by Viro \cite{V},
is a classical and
well-known invariant for configurations of skew lines. We sketch below
briefly the well-known proof that it corresponds to the switching
class of an associated linking matrix.

We prefer to work with switching classes corresponding to
symmetric matrices (up to conjugation by signed permutation matrices)
with zero diagonal and off-diagonal coefficients in $\{\pm 1\}$.

\medskip

The {\it vorticity} $\hbox{vort}(L_i,L_j,L_k)$ of three lines
(see \cite[Section 2]{V} or \cite[Section 3]{CP}) is defined as
the product $x_{i,j}x_{j,k}x_{k,i}\in \{\pm 1\}$ of the signs
for the corresponding three crossings. The result
is independent of the chosen orientations for $L_i,L_j,L_k$,
classifies the skew-configuration $\{L_i,L_j,L_k\}$ up to
isotopy and
yields an invariant
$$\{\hbox{triplets of lines in configurations of skew lines}\}
\longrightarrow \{\pm 1\}\ .$$

Let us remark that almost all authors use the terminology
{\it linking coefficient} instead of vorticity.
This is slightly confusing since the linking coefficient
denotes also the isotopy type of
a pair of oriented skew lines.

The {\it set of vorticities} is the list of vorticities
$\hbox{vort}(L_i,L_j,L_k)$ for  all
triplets of lines $\{L_i,L_j,L_k\}$ in a configuration of skew lines.

Sets of vorticities (defining a {\it two-graph}, see \cite{Za1}) and
switching classes are equivalent. Indeed, vorticities of
a configuration of skew lines $\mathcal C$ can easily be retrieved from
a linking matrix for
$\mathcal C$. Conversely, given all vorticites ${\rm vort}(L_i,L_j,L_k)$
of a configuration $\mathcal C=\{L_1,\dots,L_n\}$, 
choose an orientation of the first line $L_1$.
Orient the remaining lines
$L_2,\dots,L_n$ such that they cross $L_1$ positively.
A linking matrix $X$ for $\mathcal C$ is given by $x_{1,i}=
x_{i,1}=1$ for $i$ such that $2\leq i\leq n$
and $x_{a,b}={\rm vort}(L_1,L_a,L_b)$ for $2\leq a\not= b\leq n$.

Two configurations of skew lines are {\it homologically equivalent}
(the terminology refers to properties of the complement, endowed with
a suitable extra-structure, of
a configuration in $\mathbb{RP}^3$) if there
exists a bijection
between their lines, which preserves all vorticities.
Two configurations are {\it homologically equivalent} if and only
if they have switching-equivalent linking matrices.

The sign indeterminacy in linking matrices representing switching classes
makes their use more difficult.
A satisfactory answer addressing this problem will be given
in Section \ref{oddswitching} for switching classes of odd order. 
For even orders, there seems to be no
completely satisfactory way to get rid of all sign-indeterminacies,
see Section \ref{even_case}.

\section{Euler partitions}\label{euler}

In this section we study invariants of computational cost $O(n^2)$
for switching classes of order $n$.

The behaviour of switching classes depends on
the parity of their order.

Switching classes of odd order $2n-1$ are in bijection with Eulerian
graphs. This endows the lines of a skew configuration
consisting of an odd number of 
lines with a {\it semi-orientation} (a canonical orientation,
up to global change) which we call the {\it Eulerian semi-orientation}.
An Eulerian semi-orientation induces a partition of the lines into equivalence
classes by counting their number of positive crossings.
We consider the case of odd order in Section \ref{oddswitching}.

The situation for switching classes of even order $2n$ is more
complicated. We replace Eulerian graphs appearing for odd orders
by a suitable kind of rooted binary trees which we call
{\it Euler trees}. The leaves of the Euler tree induce again a
natural partition, called the {\it Euler partition},
of the set of lines into equivalence classes
of even cardinalities. Section
\ref{even_case} deals with the even case.

\subsection{Switching classes of odd order - Eulerian semi-orientations}
\label{oddswitching}

A simple finite graph $\Gamma$ is {\it Eulerian} if all its
vertices are of even degree. 
Figure \ref{Euler_graphs_5} shows all seven Eulerian graphs on $5$ vertices.

\begin{figure}[h]
\epsfysize=4cm
\centerline{\epsfbox{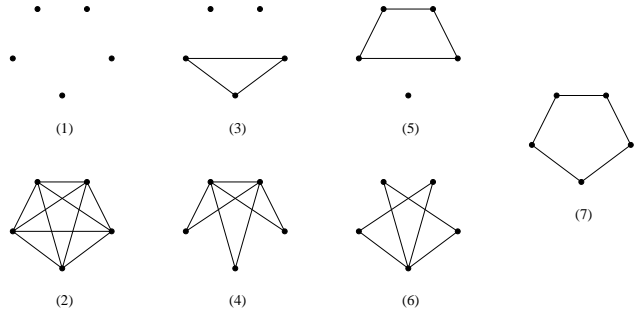}}
\caption{All Eulerian graphs on $5$ vertices}\label{Euler_graphs_5}
\end{figure}

The following well-known result goes back
to Seidel \cite{Se}.

\begin{prop} \label{Eulerbijection}
Eulerian graphs with an odd number $2n-1$ of vertices are
in bijection with switching classes of order $2n-1$.
\end{prop}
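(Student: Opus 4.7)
The plan is to exhibit a two-sided inverse of the map that sends a graph to its switching class: namely, to show that every switching class of odd order contains exactly one Eulerian graph, obtained by switching at the set of odd-degree vertices.

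The first step is a parity computation for the degree of a vertex $v$ under the switch at a subset $V_-\subset V$. If $v\in V_-$, only the edges from $v$ to $V\setminus V_-$ are toggled, so the new degree of $v$ is
\[
\deg_{\Gamma}(v)-2|N(v)\cap(V\setminus V_-)|+|V\setminus V_-|,
\]
which has the same parity as $\deg_\Gamma(v)+|V\setminus V_-|$. A symmetric calculation for $v\notin V_-$ shows that the new degree has the same parity as $\deg_\Gamma(v)+|V_-|$. I record these two congruences as a small lemma.

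Next comes uniqueness. Suppose $\Gamma$ is Eulerian with $|V|=2n-1$, and the switch of $\Gamma$ at $V_-$ is also Eulerian. If $V_-\neq\emptyset$ and $V_-\neq V$, the formulas above force both $|V_-|$ and $|V\setminus V_-|$ to be even, which is impossible since their sum $2n-1$ is odd. Hence the only switching operations preserving Eulerianness are the two trivial ones $V_-=\emptyset$ and $V_-=V$, which act identically on $\Gamma$. Therefore, a switching class of odd order contains at most one Eulerian graph.

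For existence, start with any graph $\Gamma$ in the switching class and let $O\subset V$ denote its set of odd-degree vertices. By the handshake lemma $|O|$ is even, so $|V\setminus O|=(2n-1)-|O|$ is odd. The parity formulas then yield that the switch of $\Gamma$ at $V_-=O$ has, for every $v\in O$, new degree congruent to $1+1\equiv 0$ modulo $2$, and for every $v\in V\setminus O$, new degree congruent to $0+0\equiv 0$ modulo $2$. Thus this switch is Eulerian. Combining existence and uniqueness, the assignment
\[
\{\text{switching classes of order }2n-1\}\longrightarrow \{\text{Eulerian graphs on }2n-1\text{ vertices}\}
\]
sending a class to its unique Eulerian representative is a bijection (surjectivity is trivial since every Eulerian graph is in its own switching class).

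The only subtle point is uniqueness: one has to notice that the obstruction to having two distinct Eulerian graphs in one switching class is precisely the parity of $|V|$, and this is where the hypothesis that $|V|$ is odd is used in an essential way—exactly as it fails in the even case, which motivates the more elaborate Euler tree construction of Section~\ref{even_case}.
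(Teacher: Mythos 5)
Your proof is correct and takes essentially the same approach as the paper: existence by switching at the set of odd-degree vertices (which the paper phrases as conjugation by the diagonal matrix with entries $(-1)^{v_i}$), and uniqueness because a non-trivial switch cannot preserve Eulerianness when the number of vertices is odd. The only difference is that you spell out the degree-parity computation behind the uniqueness step, which the paper merely asserts.
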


We recall a simple proof of Proposition \ref{Eulerbijection}
since it yields a fast algorithm for computing
Eulerian semi-orientations on configurations with an odd number 
of skew lines.

\begin{proof}
Choose a representing matrix $X$ of a switching class.
For $i$ such that $1\leq i\leq 2n-1$ define the number
$$v_i=\sharp\{j\ \vert\ x_{i,j}=1\}=\sum_{j=1,j\not= i}^{2n-1}
\frac{x_{i,j}+1}{2}$$
counting all entries equal to $1$ in the $i-$th row of $X$.
Since $X$ is symmetric, the vector $(v_1,\dots,v_{2n-1})$
has an even number of odd
coefficients and conjugation of the matrix $X$ with the diagonal matrix having
diagonal entries $(-1)^{v_i}$ turns $X$ into a matrix $X_E$ having an even number
of $1$'s in each row and column. The matrix $X_E$ is well-defined up
to conjugation
by a permutation matrix and defines an Eulerian graph $\Gamma_X$
with vertices
$\{1,\dots,2n-1\}$ and edges $\{i,j\}$ if $(X_E)_{i,j}=1$.
The Eulerian graph $\Gamma_X$ is unique up to relabeling its
vertices since switching with respect to a non-trivial subset 
of vertices destroys the Eulerian property of
$\Gamma_X$.
\end{proof}

A {\it semi-orientation} of a set of lines $\mathcal L$ is an orientation
of all lines in $\mathcal L$, up to global inversion of all orientiations.

Let $\mathcal C$ be a configuration of skew lines having an odd number of 
lines. Label and orient the lines of $\mathcal C$
arbitrarily in order to get a linking matrix $X$. Inverting the
orientations of all lines having an odd number of positive crossings
we get a unique semi-orientation which we
call the {\it Eulerian semi-orientation} of $\mathcal C$.

An {\it Eulerian linking matrix} $X_E$ associated to an Eulerian
semi-orientation of $\mathcal C$ is uniquely defined up 
to conjugation by a
permutation matrix. Its invariants
coincide with those of the switching class of $X_E$ but are slightly
easier to compute since there is no sign ambiguity. In particular,
some of them can be computed using only $O(n^2)$ operations.

An {\it Eulerian partition} of the set of lines of a configuration
consisting of an odd number of lines is by definition
the partition of the lines into subsets ${\mathcal L}_k$
consisting of all lines involved in exactly $2k$
positive crossings for an Eulerian semi-orientation.

\medskip

A few more invariants of Eulerian matrices are:

\begin{enumerate}

\item The total sum $\sum_{i,j} x_{i,j}$ of all entries in an Eulerian
linking matrix $X_E$ (this is of course equivalent to the computation of
the number of entries equal to $1$ in $X_E$). 
The computation of this invariant needs only $O(n^2)$ operations.

\item Its signature $\epsilon=\prod_{i<j} x_{i,j}$. The easy
identity
$$\epsilon=(-1)^{\left(-n(n-1)+\sum_{i,j}x_{i,j}\right)/4}$$
relates the signature to the total sum $\sum_{i,j} x_{i,j}$ of all entries
in an Eulerian linking matrix.

\item The number of rows of $X_E$ with given row-sum.
These numbers yield of course the
cardinalities of the sets ${\mathcal L}_0,{\mathcal L}_1,\dots$ and can 
be computed using $O(n^2)$ operations.

\item All invariants of the associated
Eulerian graph (having edges corresponding to entries
$x_{i,j}=1$) defined by $X_E$, e.g. the number of triangles or of
other fixed subgraphs. In particular, one can consider the number
$a_{i,j}$ of edges joining a vertex of degree $2i$ to a vertex of
degree $2j$.

\end{enumerate}

\medskip

For example, for $7$ vertices, there are $54$ different Eulerian
graphs, $36$ different sequences 
of vertex degrees (up to a permutation of the vertices), 
and $18$ different numbers for the cardinality of 1's in $X_E$.

\subsection{Switching classes of even order - Euler partitions}
\label{even_case}

The situation in this case is more complicated and less
satisfactory.

Given a matrix $X$ representing a
switching class with $2n$ vertices,
there exists a natural partition of the $2n$ rows $R$ of $X$ into two subsets
$R_+$ and $R_-$ according to the sign
$$\epsilon_i=\prod_{j\not=i}x_{i,j}\ \prod_{s<t}x_{s,t}$$
associated to the $i-$th row of $X$. This sign is well-defined
since switching (conjugation) with respect to a diagonal 
$\{\pm 1\}-$matrix $D$
multiplies both factors $\prod_{j\not=i}x_{i,j}$
and $\prod_{s<t}x_{s,t}$ by $\det (D)\in \{\pm 1\}$.

Since $\prod_i \epsilon_i=\prod_{i,j,\ i\not=j }x_{i,j}
\left(\prod_{s<t}x_{s,t} \right)^{2n}=1$,
both subsets $R_+,R_-$ have
even cardinalities.

If $R_+$ (or equivalently, $R_-$) is non-empty, it defines a symmetric
submatrix $X_+$ of even size $\sharp(R_+)$ corresponding to all
rows and columns with indices in $R_+$. Iterating the above construction
we get a partition $R_+=R_{++}\cup R_{+-}$. This construction is
most conveniently encoded by a rooted binary tree embedded in the
oriented plane which we
call the {\it Euler tree} of $X$: Draw a root $R$ corresponding
to the row-set $R$ of $X$. If the partition $R=R_+\cup R_-$ is
non-trivial, join the root $R$ to  a left successor called
$R_-$ and a right successor called $R_+$. The Euler tree of $X$ is now
constructed recursively by gluing the root $R_\pm $ of the Euler tree
associated to $X_\pm $ onto the corresponding successor $R_\pm $
of the root $R$.

The leaves of the Euler tree $T(X)$ of $X$ correspond to subsets
$R_w$ (with $w\in\{\pm\}^*$) of even cardinalities $2n_w$
summing up to $2n$. Leaves of $T(X)$ define
symmetric submatrices in $X$ which we call {\it Eulerian}:
All their row-sums are identical modulo $2$ and can be chosen to
be even, after a suitable conjugation.
The row partition $R=R_+\cup R_-$ of an Eulerian matrix $X$
of even size is by definition trivial. The sign $\epsilon\in\{\pm 1\}$
defined as $\epsilon=\epsilon_i$ for $i$ an arbitrary row of $X$
is called the {\it signature} of the Eulerian matrix $X$.
An Eulerian matrix of size $2$ has always signature $1$. For
Eulerian matrices of size $2n\geq 4$ both signs can occur
as signatures
since changing the signs of the entries $x_{i,j},\ 1\leq i\not= j\leq 3$,
inverts the signature of an Eulerian matrix (and preserves the set of
Eulerian matrices of even size $\geq 4$).
The signature of an Eulerian matrix encodes the parity
of the number of edges in an Eulerian graph (having only vertices
of even degrees) in the switching class of $X$.

The leaves of the Euler tree define a natural partition of the set of
rows of $X$ into subsets. We call
this partition the {\it Euler partition}.

An Euler tree is {\it signed} if its leaves are endowed with signs
$\pm 1$ corresponding to the signs of the associated
Eulerian matrices. An Euler tree is {\it weighted} if its leaves are endowed 
with strictly positive natural weights, a weight $m$ corresponding to 
an Eulerian matrix of size $2m\times 2m$. A {\it signed weighted}
Euler tree is both signed and weighted.

\begin{exa} \label{exampletree} The symmetric matrix
$$\left(\begin{array}{rrrrrrrrrr}
 0 & -1 &  1 & -1 &  1 &  1 &  1 &  1 &  1 & -1 \cr
-1 &  0 & -1 &  1 & -1 &  1 &  1 &  1 & -1 & -1 \cr
 1 & -1 &  0 & -1 &  1 &  1 &  1 & -1 & -1 &  1 \cr
-1 &  1 & -1 &  0 & -1 &  1 &  1 & -1 &  1 & -1 \cr
 1 & -1 &  1 & -1 &  0 &  1 & -1 & -1 &  1 &  1 \cr
 1 &  1 &  1 &  1 &  1 &  0 &  1 & -1 &  1 & -1 \cr
 1 &  1 &  1 &  1 & -1 &  1 &  0 & -1 & -1 &  1 \cr
 1 &  1 & -1 & -1 & -1 & -1 & -1 &  0 &  1 &  1 \cr
 1 & -1 & -1 &  1 &  1 &  1 & -1 &  1 &  0 &  1 \cr
-1 & -1 &  1 & -1 &  1 & -1 &  1 &  1 &  1 &  0
\end{array}\right)$$
yields the Euler partition
$$
R_{--}=\{3,5\} \ \ \cup 
\ \ R_{-+}=\{6,10\}\ \ 
\cup\ \ R_+=\{1,2,4,7,8,9\}
$$
(where the Eulerian submatrix associated to $R_+$ has signature $1$).
The associated signed Euler tree (with leaves of respective weights $1,1$ and 
$3$) is depicted in Figure \ref{tree}.

\begin{figure}[h]
\epsfysize=3cm
\centerline{\epsfbox{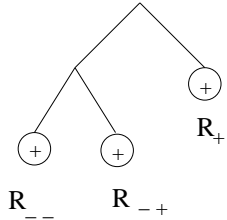}}
\caption{The signed Euler tree defined by Example
\ref{exampletree}}\label{tree}
\end{figure}

\end{exa}

\begin{exa}\label{exam_euler_even}
The characteristic polynomial of a
linking matrix of a configuration of skew lines is in general weaker than its
switching class: The linking matrices
$$\left(
\begin{array}{rrrrrrrr}
 0 &   1 &   1 &   1 &   1 &   1 &   1 &   1\\
 1 &   0 &  -1 &   1 &  -1 &   1 &  -1 &   1\\
 1 &  -1 &   0 &   1 &   1 &   1 &   1 &  -1\\
 1 &   1 &   1 &   0 &  -1 &   1 &   1 &  -1\\
 1 &  -1 &   1 &  -1 &   0 &   1 &   1 &  -1\\
 1 &   1 &   1 &   1 &   1 &   0 &  -1 &  -1\\
 1 &  -1 &   1 &   1 &   1 &  -1 &   0 &  -1\\
 1 &   1 &  -1 &  -1 &  -1 &  -1 &  -1 &   0
\end{array}\right)
$$
and
$$\left(
\begin{array}{rrrrrrrr}
 0 &   1 &   1 &   1 &   1 &   1 &   1 &   1\\
 1 &   0 &   1 &   1 &  -1 &   1 &  -1 &   1\\
 1 &   1 &   0 &   1 &  -1 &   1 &  -1 &   1\\
 1 &   1 &   1 &   0 &  -1 &   1 &   1 &  -1\\
 1 &  -1 &  -1 &  -1 &   0 &   1 &   1 &  -1\\
 1 &   1 &   1 &   1 &   1 &   0 &  -1 &  -1\\
 1 &  -1 &  -1 &   1 &   1 &  -1 &   0 &  -1\\
 1 &   1 &   1 &  -1 &  -1 &  -1 &  -1 &   0
\end{array}\right)
$$
are in different switching classes: The row-partition $R=R_-\cup R_+$ of the
first matrix is given by $R_-=\{2,5,7,8\}$ and $R_+=\{1,3,4,6\}$ 
with associated Eulerian
matrices $X_-$ and $X_+$ both of signature $1$. The second matrix
is Eulerian with signature $1$.
On the other hand, they have the same characteristic polynomial
$$(t-3)(t-1)^2(t+1)(t+3)^2(t^2-2t-11)\ .$$
This example is minimal in the sense that distinct switching
classes of order less than $8$ have distinct characteristic polynomials.
\end{exa}

Let us mention a last invariant related to the
Euler tree for a switching class $X$ having even order $2n$.
Let $R_1,\dots,R_m\subset R$ be the Euler partition of $X$ .
For $1\leq i\leq j\leq m$, define numbers $a_{i,j}
\in \{\pm 1\}$ by
$$a_{i,j}=\left\lbrace\begin{array}{ll}
\displaystyle
\prod_{t\not= s_{i_0}\in R_i} x_{s_{i_0},t}\prod_{s,t\in R_i,s<t} x_{s,t}
\quad&i=j\\
\displaystyle \prod_{s\in R_i,\ t \in R_j}x_{s,t}&i\not=j\end{array}
\right.
$$
where $s_{i_0}\in R_i$ is a fixed element.
Note that the number $a_{i,i}$ is the signature of the Eulerian matrix defined by the rows (and columns) of the set $R_i$.
One can easily check that the numbers $a_{i,j}$ are
well-defined.

\begin{rem} The equivalence relation induced on lines by the Euler
partition is fairly coarse. It is for instance generally much rougher
that the equivalence relation given by homologous lines
defined by Viro \cite {V}.
\end{rem}

\subsection{Enumerative aspects} It is natural to enumerate (signed) weighted
Euler trees according to the total sum $n$ of all weights.

The generating function $F(z)=
\sum_{n=0}^\infty \alpha_n z^n$ enumerating the number $\alpha_n$ of
distinct weighted Euler trees with total weight $n$
satisfies the equation
$$F(z)=\frac{1}{1-z}+\left(F(z)-1\right)^2$$
(with $\alpha_0=1$ corresponding to the empty tree).
Indeed, weighted Euler trees reduced to a leaf contribute $1/(1-z)$ to
$F(z)$. All other weighted Euler trees are obtained by gluing two 
weighted Euler
trees of strictly positive weights below a root and are enumerated
by the factor $\left(F(z)-1\right)^2$.

Solving for $F(z)$ we get the closed form
$$F(z)=\sum_{n=0}^\infty \alpha_nz^n=\frac{3(1-z)-\sqrt{(1-z)(1-5z)}}
{2(1-z)}.$$
showing that
$$\lim_{n\rightarrow\infty} \frac{\alpha_{n+1}}{\alpha_n}=5\ .$$
The first terms $\alpha_0,\alpha_1,\dots$ are given by
$$1,1,2,5,15,51,188,731,2950,12235,\dots,$$
see also Sequence A7317 of \cite{Sl}.

Similarly, the generating function $F_s(z)=
\sum_{n=0}^\infty \beta_n z^n$ enumerating the number $\beta_n$ of signed
weighted Euler
trees (keeping also track of the signature of all leaves with weight $\geq 2$)
with total weight $n$ satisfies the equation
$$F_s(z)=\frac{1+z^2}{1-z}+\left(F_s(z)-1\right)^2\ .$$
We get thus
$$F_s(z)=\sum_{n=0}^\infty \beta_nz^n=\frac{3(1-z)-\sqrt{(1-z)(1-5z-4z^2)}}
{2(1-z)}.$$
and
$$\lim_{n\rightarrow\infty} \frac{\beta_{n+1}}{\beta_n}=\frac{5+\sqrt{41}}{2}
\sim 5.7016\ .$$
The first terms $\beta_0,\beta_1,\dots$ are given by
$$1,1,3,8,27,104,436,1930,8871,41916,\dots, $$
see Sequence A110886 of \cite{Sl}.

\section{Spindle structures for switching classes}\label{spindles}

A construction of Mazurovski{\u\i} originally called the {\it isotopy
join} (or {\it spindle}) associates a configuration of $n$ skew 
lines to every permutation of $n$ letters.
We recall that a spindle is a configuration of skew lines with all lines
intersecting an oriented auxiliary line $A$, called its {\it axis}.
A {\it spindle-configuration}
(or a {\it spindle structure}) is a configuration of skew lines
isotopic to a spindle.

The orientation of the axis $A$ induces a linear order $L_1<\dots<L_n$
on the $n$ lines of a spindle $C$.
Each line $L_i\in C$ defines
a plane $\Pi_i$ containing $L_i$ and the axis $A$.

A second oriented auxiliary line $B$ (called a {\it directrix})
in general position with respect to $A,\Pi_1,\dots,\Pi_n$ and crossing
$A$ negatively,
intersects the planes $\Pi_1,\dots,\Pi_n$ at points
$\sigma(L_i)= B\cap \Pi_i$.
One can assume $\sigma(L_i)\in L_i$ by a suitable rotation
fixing $A\cap \Pi_i$ of the plane $\Pi_i$ containing $L_i$.
Since the orientation of $B$ induces a linear order on the points
$\sigma(L_i)$, we get a spindle-permutation (still denoted)
$i\longmapsto \sigma(i)$ of the set $\{1,\dots,n\}$ by identifying
the two linearly ordered sets $L_1,\dots,L_n$ and $\sigma(L_1),\dots,
\sigma(L_n)$ in the obvious way with $\{1,\dots,n\}$.
Figure \ref{spindle} displays an example corresponding  to
$\sigma(1)=1,\ \sigma(2)=4,\ \sigma(3)=2,\ \sigma(4)=5,\ \sigma(5)=3$.

\begin{figure}[h]
\epsfysize=5cm
\centerline{\epsfbox{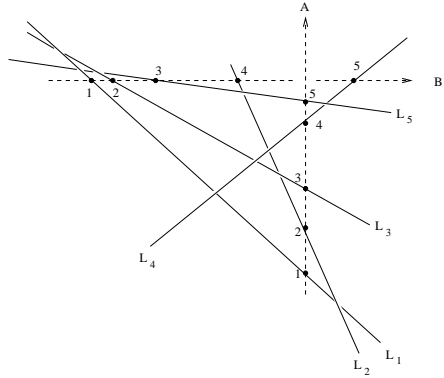}}
\caption{A spindle}\label{spindle}
\end{figure}

A linking matrix $X$ of a spindle $C$ is easily computed as follows.
Transform $C$ isotopically into a spindle with
oriented axis $A$ and directrix $B$ as
above. Orient a line $L_i$ from $L_i\cap A$ to $\sigma(L_i)=L_i\cap B$. A
straightforward
computation shows that the linking matrix $X$ of this labeled and
oriented configuration of skew lines has coefficients
$$x_{i,j}={\rm sign}((i-j)(\sigma(i)-\sigma(j)))$$
where $\hbox{sign}(0)=0$ and $\hbox{sign}(x)=\frac{x}{\vert x\vert}$
for $x\not=0$ and where
$\sigma$ is the corresponding spindle-permutation.

The linking matrix of Figure \ref{spindle} is 
$$X=\left(\begin{array}{rrrrr}
0 &  1 &  1 &  1 &  1\cr
1 &  0 & -1 &  1 & -1\cr
1 & -1 &  0 &  1 &  1\cr
1 &  1 &  1 &  0 & -1\cr
1 & -1 &  1 & -1 &  0\end{array}\right)\ .$$

\medskip

Two spindle-permutations are {\it equivalent}
(see \cite[Section 15]{CP}), and give rise to isotopic spindle
configurations,
if they are equivalent under the equivalence relation generated by
\begin{enumerate}
\item (Circular move)
$$\sigma\sim \mu\hbox{ if }\mu(i)=(s+\sigma((i+t)\pmod n)) \pmod n$$
for some integers $0\leq s,t<n$ (all integers are modulo $n$).

\item (Vertical reflection of a block or local reversal)
$\sigma\sim \mu$ if $\sigma([1,k])=[1,k]$ and
$$\mu(i)=\left\{\begin{array}{ll}
k+1-\sigma(k+1-i)\qquad &i\leq k\cr
\sigma(i)&i> k
\end{array}\right.$$
for some integer $k\leq n$ (see Figure \ref{vertical_ref}).

\begin{figure}[h]
\epsfysize=3cm
\centerline{\epsfbox{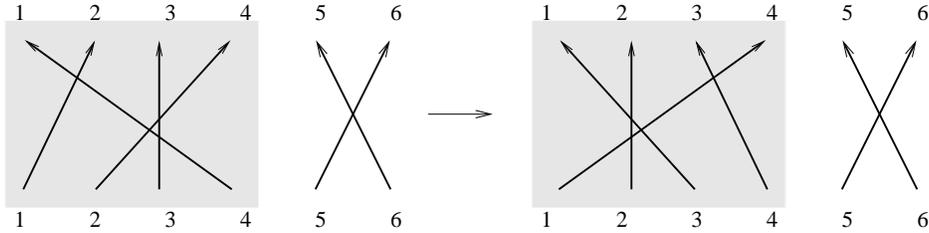}}
\caption{Vertical reflection of a block}\label{vertical_ref}
\end{figure}

\item  (Horizontal reflection of a block or local inversion)
$\sigma\sim \mu$ if there exists an integer $1<k\leq n$
such that $\sigma([1,k])=[1,k]$ and
$$\mu(i)=\left\{\begin{array}{ll}
\sigma^{-1}(i)\qquad &i\leq k\cr
\sigma(i)&i> k\end{array}\right.$$
(see Figure \ref{horizontal_ref}).

\begin{figure}[h]
\epsfysize=3cm
\centerline{\epsfbox{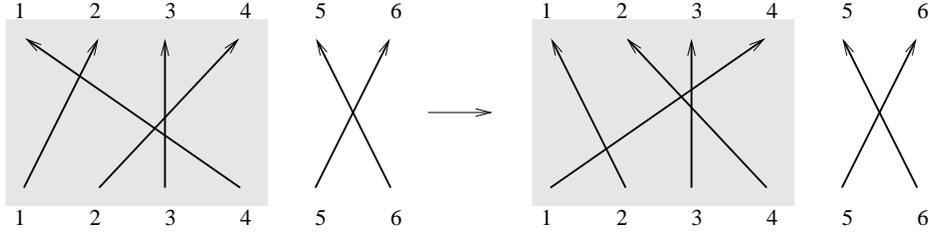}}
\caption{Horizontal reflection of a block}\label{horizontal_ref}
\end{figure}

\end{enumerate}

Permutations giving rise to
linking matrices in a common switching class are spindle-equivalent
by Corollary \ref{maincor}.

\section{An algorithm for a spindle-structure}\label{algo}

Given a switching class represented by a matrix $X$, the 
following algorithm constructs a spindle-permutation
(which is unique up to spindle-equivalence) with
linking matrix in the switching class of $X$
or proves non-existence of such a permutation.

\medskip
\begin{algo} \
\begin{itemize}
\item[{\bf Initial data.}] A natural number $n$ and a switching class
represented by a symmetric matrix
$X$ of order $n$ with rows and columns indexed by $\{1,\dots,n\}$
and coefficients $x_{i,j}$ satisfying

$$\begin{array}{ll}
x_{i,i}=0,\qquad &1\leq i\leq n\ ,\cr
x_{i,j}=x_{j,i}\in \{\pm 1\}\ ,\quad &1\leq i\not= j\leq n\ .
\end{array}$$

\item[{\bf Initialization.}] Conjugate the symmetric matrix $X$ by the
diagonal matrix with diagonal coefficients $(1,x_{1,2},x_{1,3},
\dots,x_{1,n})$. Set $\gamma(1)=\gamma(2)=1,\ \sigma(1)=1$ and $k=2$.

\item[{\bf Main loop.}]

Replace $\gamma(k)$ by $\gamma(k)+1$ and set
$$\sigma(k)=1+\sharp\{j\ \vert\ x_{\gamma(k),j}=-1\}+\sum_{s=1}^{k-1}
x_{\gamma(s),\gamma(k)}\ .$$

Check the following conditions:
\begin{enumerate}
\item $\gamma(k)\not=\gamma(s)$ for $s\in\{1,\dots,k-1\}$.
\item $x_{\gamma(k),\gamma(s)}=\hbox{sign}(\sigma(k)-\sigma(s))$ for
$s\in \{1,\dots,k-1\}$
(where $\hbox{sign}(0)=0$ and $\hbox{sign}(x)=\frac{x}{\vert x\vert}$
for $x\not=0$).
\item for $j\in \{1,\dots,n\}\setminus\{\gamma(1),\dots,\gamma(k)\}$ and
for $s\in\{1,\dots,k-1\}$:\\
if $x_{j,\gamma(s)}\ x_{\gamma(s),\gamma(k)}=-1$, then
$x_{j,\gamma(k)}=x_{j,\gamma(s)}$.
\end{enumerate}

\medskip

\noindent
If all conditions are fulfilled, then:
\begin{itemize}
\item[] if $k=n$, print all the data (mainly the spindle-permutation
$i\longmapsto \sigma(i)$ and perhaps also the conjugating permutation
$i\longmapsto \gamma(i)$) and stop.
\item[] if $k<n$, then set $\gamma(k+1)=1$, replace $k$ by $k+1$
and iterate the main loop.
\end{itemize}

\medskip

\noindent
If at least one of the above conditions is not fulfilled, then:
\begin{itemize}

\item[] while $\gamma(k)=n$ replace $k$ by $k-1$.

\item[] if $k=1$, print \lq\lq no spindle structure exists for
this switching class'' and stop.

\item[] if $k>1$, iterate the main loop.

\end{itemize}

\end{itemize}
\end{algo}

\subsection{Explanation of the algorithm}

The {\bf initialization} is simply a
normalization: we assume that the first row of the matrix represents the
first line of a spindle-permutation $\sigma$ normalized to $\sigma(1)=1$
(up to a circular move, this can always be done for a
spindle-permutation).

The {\bf main loop} assumes that row number $\gamma(k)$ of $X$ contains
the linking numbers of the $k-$th line $L_k$
(supposing a correct possible choice of the rows encoding the
linking numbers of $L_1,\dots,
L_{k-1}$).
The image $\sigma(k)$ of $k$ under a spindle-permutation is
then uniquely defined and given by the formula used in the main loop.

One has to check three necessary conditions:
\begin{itemize}
\item The first condition checks that row number $\gamma(k)$ has
not been used before.
\item The second condition checks the consistency of the choice for $\gamma(k)$
with all previous choices.
\item If the third condition is violated, the choice of rows
$\gamma(1),\dots,\gamma(k)$ leads to a dead end. Indeed, we
have then either
$x_{\gamma(s),\gamma(k)}=1,x_{j,\gamma(s)}=-1$ or
$x_{\gamma(s),\gamma(k)}=-1,x_{j,\gamma(s)}=1$ for some index
$j\not\in\{\gamma(1),\gamma(2),\dots,\gamma(k)\}$ and some natural
integer $s<k$. In the first case, the line-segments $L_s$ and
$L_k$ with $s<k$ representing $\sigma$ graphically do not cross.
This shows that any line-segment $L_m$ with $m>k$ which crosses $L_s$
has to cross $L_k$ first. Since there must be at least one such line
segment corresponding to the choise $\gamma(j)=m$, the algorithm
must backtrack. The second case is similar.

\end{itemize}

The algorithm runs correctly even without checking out Condition (3).
However, it loses much of its interest:
An instance of Condition (3) (with fixed $j,s,k$) is violated
with probability $\frac{1}{4}$ for a ``random'' choice
(made e.g. by flipping a fair coin) of
$x_{j,\gamma(s)},x_{j,\gamma(k)}\in \{\pm 1\}$.
This ensures fast running time in the average,
as observed experimentally.

The algorithm, if successful, produces two permutations
$\sigma$ and $\gamma$. The linking matrix of the spindle
permutation $\sigma$ is in the switching class of $X$ and
$\gamma$ yields a conjugation between these two matrices.
More precisely:
$$x_{\gamma(i),\gamma(j)}=\hbox{sign}((i-j)(\sigma(i)-\sigma(j)))$$
under the assumption $x_{1,i}=x_{i,1}=1$ for $2\leq i\leq n$.

Failure of the algorithm (indicated by the output
``no spindle structure exists for this switching
class'') proves non-existence of a spindle structure in
the switching class of $X$.

\section*{Acknowledgments}
The second author wishes to thank the Institut Fourier where most of
this work was done and Mikhail Zaidenberg for hosting his stay.

\end{document}